\documentclass[a4paper,11pt]{amsart}

\pdfoutput=1

\usepackage{amsmath,amssymb,amsthm,url}
\usepackage[utf8]{inputenc}
\usepackage[T1]{fontenc}
\usepackage{libertine}
\usepackage[libertine,cmintegrals,cmbraces]{newtxmath}
\usepackage{bbold}
\usepackage[shortlabels]{enumitem}
\usepackage{mathtools}
\usepackage{adjustbox}
\usepackage{microtype}
\usepackage{xcolor}
\urlstyle{sf}
\usepackage{tikz}
\usepackage{tikz-cd}
\usepackage{nicefrac}
\usepackage{dsfont}
\usepackage{todonotes}
\usepackage{a4wide}
\usepackage{quiver}
\usepackage{anyfontsize}

\AtBeginDocument{%
   \def\MR#1{}
} 
\usepackage{euscript}
\usepackage[all]{xy}

\usepackage[pagebackref]{hyperref}
  
\hypersetup{%
  bookmarksnumbered=true,
  colorlinks=true,%
  linkcolor=blue,%
  citecolor=blue,%
  filecolor=blue,%
  menucolor=blue,%
  urlcolor=blue,%
  pdfnewwindow=true,%
  pdfstartview=FitBH}
\usepackage[capitalise]{cleveref}


\setlength{\parskip}{1ex}
\setlength{\parindent}{0pt}

\usepackage{xcolor}
\definecolor{seagreen}{RGB}{46,139,87}
\definecolor{maroon}{RGB}{128,0,0}
\definecolor{darkviolet}{RGB}{148,0,211}
\definecolor{twelve}{RGB}{100,100,170}
\definecolor{thirteen}{RGB}{100,150,50}
\definecolor{fourteen}{RGB}{200,0,0}
\definecolor{fifteen}{RGB}{0,200,0}
\definecolor{sixteen}{RGB}{0,0,200}
\definecolor{seventeen}{RGB}{200,0,200}
\definecolor{eighteen}{RGB}{0,200,200}


\newcommand{\bb}[1]{\mathbb{#1}}

\newcommand{\es}[1]{\EuScript{#1}}
\renewcommand{\sf}[1]{{\mathsf{#1}}}


\DeclareMathOperator{\hocolim}{\mathsf{hocolim}}
\DeclareMathOperator{\holim}{\mathsf{holim}}

\DeclareMathOperator{\hocof}{\mathsf{hocof}}


\newcommand{\sS}{\sf{sSets}}

\newcommand{\vect}[1]{\mathsf{Vect}_{#1}}


\newcommand{\Fun}{\sf{Fun}}

\DeclareMathOperator{\Map}{\mathsf{Map}}


\DeclareMathOperator{\id}{\mathsf{Id}}

  \newcommand{\adjunction}[4]{
\xymatrix{
#1:#2 \ar@<.5ex>[r] &
\ar@<.5ex>[l] #3:#4
}}

\newtheorem{thm}{Theorem}[section]

\newtheorem{lem}[thm]{Lemma}
\newtheorem{cor}[thm]{Corollary}

\newtheorem{xxthm}{Theorem}

\newtheorem{xxcor}[xxthm]{Corollary}

\theoremstyle{definition}
\newtheorem{definition}[thm]{Definition}

\newtheorem{rem}[thm]{Remark}


\begin{document}
\title{On tested Bousfield--Friedlander localizations}
\author{Niall Taggart}
\address{Queen's University Belfast}
\email{n.taggart@qub.ac.uk}
\subjclass{18N55, 18F50}

\begin{abstract}
We show that a Bousfield--Friedlander localization of a model category of functors with respect to a set of test morphisms, as introduced by Bandklayder, Bergner, Griffiths, Johnson, and Santhanam, can be characterized as a left Bousfield localization at a specific tensoring of the set of homotopy generators. This reformulation immediately strengthens their main result by establishing that such localizations always yield cellular or combinatorial model categories.
We apply this framework to functor calculus in two key ways. First, we construct a homogeneous model structure for any calculus arising from tested Bousfield–Friedlander localization. As a consequence, we construct a Quillen adjunction between homogeneous functors in Goodwillie calculus and homogeneous functors in the discrete calculus of Bauer, Johnson, and McCarthy. Second, we show that the polynomial model structure of Weiss calculus is a specific instance of this localization framework, suggesting a unifying perspective on various flavours of functor calculus.
\end{abstract}
\maketitle

\section{Introduction}
Given a model category $\es{M}$ and a (Quillen) idempotent monad $Q: \es{M} \to \es{M}$, the Bousfield--Friedlander localization of $\es{M}$ (if it exists) is a method for universally inverting those morphisms of $\es{M}$ which become equivalences after applying $Q$. These localizations have been ubiquitous throughout homotopy theory~\cite{BousfieldFriedlander, Bousfield} and are an important tool in constructing model categories for functor calculus~\cite{BiedermannChornyRondigs, BiedermannRondigs, BarnesOman}. The localization process has many nice features~\cite{Stanculescu}, but it is not clear if localizations of this form preserve cofibrant generation. This presents a hurdle in fully understanding the model structure and our ability to perform subsequent (co)localizations is hindered.

Motivated by applications to functor calculus, Bandklayder, Bergner, Griffiths, Johnson and Santhanam~\cite{BBGJS2} gave general criteria for when the Bousfield--Friedlander localization on a  model category of simplicial functors is cofibrantly generated. These criteria included the existence of a set $T(Q)$ of \emph{test morphisms} which detect fibrations in the Bousfield--Friedlander localization. The first result of this note is that these \emph{tested Bousfield--Friedlander localizations} are always left Bousfield localizations with respect to a specific set of maps induced by the test morphisms. This result allows for now standard techniques in the case of left Bousfield localizations~\cite{Hirschhorn} to be exploited within the remit of tested Bousfield--Friedlander localizations. This observation will form the crucial ingredient in joint work of the author with Bergner, Griffiths, Johnson and Santhanam on \emph{testing for calculi}, i.e., constructing calculi from sets of test morphisms and hence providing a unifying home for many variants of functor calculus, which although formally similar often appear distinct.

\begin{xxthm}[\cref{thm: BF and LBL}]\label{main thm: LBL}
Let $Q: \Fun(\es{C}, \es{D}) \to \Fun(\es{C}, \es{D})$ be a tested Quillen idempotent monad\footnote{A homotopical endofunctor $Q: \es{M} \to \es{M}$ is a \emph{Quillen idempotent monad} if $Q$ preserves homotopy pullback squares, and if there exists a natural transformation $\eta: \bb{1} \to Q$ such that both induced natural transformations $\eta_Q, Q(\eta): Q \to Q^2$ are levelwise weak equivalences.}. If $\es{D}$ is a proper cellular simplicial model category, then there exists a set $T_\es{D}(Q)$ of morphisms in $\Fun(\es{C}, \es{D})$ such that the Bousfield--Friedlander localization at $Q$ is the left Bousfield localization of the underlying model structure at the set $T_\es{D}(Q)$.
\end{xxthm}

The set $T_\es{D}(Q)$ does not agree with the set of acyclic cofibrations produced in~\cite[Theorem 8.3]{BBGJS2} for the rather obvious reason that it is not a set of generating acyclic cofibrations. Our set is formed by tensoring the test morphisms with homotopy generators of $\es{D}$, which is homotopically well-behaved. Moreover, our set avoids the use of the pushout-product. Providing an explicit set of generating acyclic cofibrations for a left Bousfield localization is a challenging endeavour, but if $\es{D}$ is stable, then one can likely produce an explicit set of generating acyclic cofibrations analogous to \emph{loc. cit.} via the theory of stable left Bousfield localization~\cite{BarnesRoitzheim}.

As a corollary, we obtain a strengthening of the main theorem of~\cite{BBGJS2} under slightly different hypotheses, see~\cref{rem: hypotheses}.

\begin{xxcor}[\cref{cor: cof gen}]
If $\es{D}$ is a proper cellular simplicial model category, then a tested Bousfield--Friedlander localization of $\Fun(\es{C}, \es{D})$ is proper and cellular.
\end{xxcor}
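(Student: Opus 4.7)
The strategy is to combine Theorem A with the standard existence and preservation results for left Bousfield localization at a set of maps. By Theorem A, a tested Bousfield--Friedlander localization of $\Fun(\es{C}, \es{D})$ coincides with the left Bousfield localization of the underlying functor model structure at the set $T_\es{D}(Q)$. Since left Bousfield localizations at a set of maps are known to preserve cellularity (Hirschhorn, Theorem 4.1.1) and combinatoriality (Smith's recognition principle, see e.g.\ Barwick), the corollary reduces to checking that the \emph{underlying} model structure on $\Fun(\es{C}, \es{D})$ is itself proper cellular (respectively, combinatorial) whenever $\es{D}$ is.

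First I would identify the underlying model structure on $\Fun(\es{C}, \es{D})$ as the projective (or equivalent simplicial diagram) model structure in the sense adopted in the paper. In the combinatorial case, the fact that $\Fun(\es{C}, \es{D})$ with the projective structure is combinatorial is standard, for instance from Lurie's HTT~A.3.3.2. In the cellular case, Hirschhorn's Theorem~11.6.1 together with Proposition~12.1.5 yields both existence and cellularity of the projective structure on diagrams valued in a cellular simplicial model category. Properness transfers pointwise: left properness because cofibrations and weak equivalences are detected objectwise, and right properness because fibrations are.

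Having established that the underlying model structure is proper cellular (respectively, combinatorial), Theorem A rewrites the tested Bousfield--Friedlander localization as the left Bousfield localization $L_{T_\es{D}(Q)}\Fun(\es{C}, \es{D})$. Invoking Hirschhorn~4.1.1 in the cellular case and Smith's theorem in the combinatorial case produces the model structure and confirms it retains the structural hypothesis, yielding the claim. The main obstacle is conceptual bookkeeping rather than mathematical depth: one must verify that the structural hypothesis on $\es{D}$ genuinely lifts to the chosen model structure on $\Fun(\es{C}, \es{D})$ (including compatibility with the simplicial enrichment of $\es{C}$), since everything else follows formally from Theorem A and the cited existence theorems.
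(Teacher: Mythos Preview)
Your proposal is correct and follows essentially the same route as the paper: the paper's proof of this corollary is a one-line appeal to Hirschhorn's Theorem~4.1.1, relying on the fact (already established in the proof of Theorem~A) that the projective model structure on $\Fun(\es{C},\es{D})$ inherits the relevant structural hypotheses from $\es{D}$. Your write-up is in fact slightly more careful than the paper in separating out the combinatorial case (citing Lurie/Smith rather than Hirschhorn), though your aside that projective cofibrations are ``detected objectwise'' is imprecise---they are objectwise cofibrations but not characterized that way---which does not affect the argument.
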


We have stated the above results for cellular model categories but they also hold in the case of combinatorial model categories.

Our applications are to the study of various flavours of functor calculus. Functor calculi are a family of homotopy theoretic abstractions of differential calculus. They have had wide ranging applications throughout mathematics including applications of Goodwillie calculus~\cite{GoodCalcI, GoodCalcII, GoodCalcIII} to algebraic $K$-theory~\cite{McCarthy} and applications of Weiss calculus to study problems of geometric origin~\cite{KrannichRandal-Williams, Hu}. To a functor $F$, a calculus assigns a sequence of functors approximating $F$ 
\[\begin{tikzcd}
	&& F \\
	\cdots & {P_nF} & \cdots & {P_1F} & {P_0F}
	\arrow[from=2-4, to=2-5]
	\arrow[from=2-2, to=2-3]
	\arrow[from=2-3, to=2-4]
	\arrow[bend right=30, from=1-3, to=2-2]
	\arrow[from=2-1, to=2-2]
	\arrow[bend left=30, from=1-3, to=2-4]
	\arrow[bend left=20, from=1-3, to=2-5]
\end{tikzcd}\]
in a way analogous to the Taylor series for a function. The functor $P_nF$ is the \emph{universal degree $n$} functor under $F$, where a functor is degree $n$ if the map $F \to P_nF$ is an equivalence.

Bandklayder, Bergner, Griffiths, Johnson and Santhanam~\cite{BBGJS2} applied their theory to two forms of functor calculus: Goodwillie calculus and the discrete calculus of Bauer, Johnson and McCarthy~\cite{BauerJohnsonMcCarthy}. In both cases, degree $n$ model structures are constructed as a Bousfield--Friedlander localization~\cite{BiedermannChornyRondigs,BiedermannRondigs, BBGJS2} at the universal approximation $P_n$, and a set of test morphisms is provided.  We show that when a calculus exists via tested Bousfield--Friedlander localizations, we can also produce a model structure that captures the homogeneous functors: those functors which are degree $n$ and have trivial degree $(n-1)$ approximation. This identification crucially relies on~\cref{main thm 2} since the work of~\cite{BBGJS2} does not produce a model structure which you can \emph{a piori} right Bousfield localize. 

\begin{xxthm}[{\cref{thm: homog as RBL}}]\label{main thm 2}
Under modest hypotheses, if both a degree $n$ and degree $(n-1)$ model structure exist on $\Fun(\es{C}, \es{D})$ as tested Bousfield--Friedlander localizations, then there is a model structure on $\Fun(\es{C}, \es{D})$ in which the bifibrant objects are the bifibrant $n$-homogeneous functors.
\end{xxthm}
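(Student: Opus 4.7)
My plan is to realise the desired model structure as a right Bousfield localization (cellularization, in Hirschhorn's sense) of the degree $n$ model structure at a well-chosen set $K$ of objects. By \cref{main thm: LBL}, the degree $n$ model structure is a left Bousfield localization of a proper cellular (resp. combinatorial) simplicial model structure, and hence inherits right properness together with cellularity (resp. combinatoriality). These are precisely the prerequisites for Hirschhorn's existence theorem for right Bousfield localizations, and I expect them, together with enough structure on $\es{D}$ for the expressions ``$n$-homogeneous'' and ``$P_{n-1}$'' to make sense, to constitute the ``modest hypotheses'' referenced in the statement.

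The key step is the choice of $K$. Since an $n$-homogeneous functor is, by definition, a degree $n$ functor $F$ whose $(n{-}1)$-approximation $P_{n-1}F$ is weakly contractible, a natural candidate is to let $K$ consist of cofibrant replacements of the homotopy cofibres of the test morphisms $T_{\es{D}}(Q_{n-1})$ witnessing the degree $(n{-}1)$ localization. Morally, these cofibres encode what is collapsed in passing from $P_n$ to $P_{n-1}$, and so are the natural cells for the $n$-homogeneous piece. With $K$ fixed, Hirschhorn's theorem then delivers the right Bousfield localization of the degree $n$ model structure at $K$.

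It remains to identify the bifibrant objects of the resulting model structure. Fibrations, and hence fibrant objects, are unchanged, so a bifibrant object is precisely a degree $n$ fibrant functor which is additionally $K$-cellular and cofibrant in the degree $n$ structure. The comparison with $n$-homogeneity splits into two directions: on the one hand, any bifibrant $n$-homogeneous functor $F$ admits a cell presentation by elements of $K$, essentially because $F$ is itself the homotopy cofibre of $P_{n-1}F \to F$; on the other hand, the defining property of $T_{\es{D}}(Q_{n-1})$ — detecting fibrations in the degree $(n{-}1)$ localization — is used to propagate triviality of $P_{n-1}$ from the cells in $K$ to every $K$-cellular object by induction along cell attachments.

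I expect the main obstacle to lie in this last direction, where one must bridge the ``bottom-up'' flavour of $K$-cellularity with the ``top-down'' locality condition of vanishing $P_{n-1}$. A clean argument should come from reformulating, via \cref{main thm: LBL}, the condition $P_{n-1}F \simeq \ast$ as locality against the morphisms generated by $T_{\es{D}}(Q_{n-1})$, and then verifying that this class of local objects is closed under the homotopy colimits used to build $K$-cellular objects. Any residual technical subtlety — for instance cofibrancy of the sources of test morphisms, or the compatibility of $P_{n-1}$ with homotopy cofibres — is likely to emerge precisely here, and should in turn sharpen what ``modest hypotheses'' are needed.
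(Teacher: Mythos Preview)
Your construction---right Bousfield localizing the degree $n$ model structure at the homotopy cofibres of the degree $(n{-}1)$ test morphisms, tensored with the homotopy generators of $\es{D}$---is exactly what the paper does. The existence argument via Hirschhorn is also the same.

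The divergence is in how the bifibrant objects are identified. The paper does \emph{not} argue by building cell presentations or by closure of $n$-reduced objects under homotopy colimits. Instead it characterises the \emph{cofibrations} (not just cofibrant objects) of the $n$-homogeneous structure by a lifting argument: it shows that between degree $n$ functors, the acyclic fibrations of the $n$-homogeneous structure coincide with the fibrations of the degree $(n{-}1)$ structure, and from this deduces that the $n$-homogeneous cofibrations are precisely the underlying cofibrations that are degree $(n{-}1)$ equivalences. The only ``colocal'' computation needed is that degree $(n{-}1)$ functors are $K$-colocally trivial, which follows from the fibre sequence
\[
\Map(G \otimes \hocof(T), F) \to \Map(G \otimes B, F) \to \Map(G \otimes A, F)
\]
for $T\colon A \to B$ a test morphism and $F$ degree $(n{-}1)$.

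Your proposed identification has two problems. First, your direction ``$n$-homogeneous $\Rightarrow$ $K$-cellular'' is not an argument: for $n$-homogeneous $F$ one has $P_{n-1}F \simeq \ast$, so ``$F$ is the cofibre of $P_{n-1}F \to F$'' is vacuous and gives no cell presentation by elements of $K$. Showing an arbitrary $n$-homogeneous functor is $K$-colocal is genuinely the hard direction, and the paper sidesteps it entirely by the lifting reformulation above. Second, in your other direction you write that $P_{n-1}F \simeq \ast$ is ``locality against $T_\es{D}(Q_{n-1})$''; it is not---locality is being degree $(n{-}1)$. What you want is $T_\es{D}(Q_{n-1})$-\emph{acyclicity}, which is indeed closed under homotopy colimits, so this direction is salvageable with the corrected terminology. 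But without a working argument for the first direction your approach does not close, and the paper's fibration-comparison route is the cleaner way through.
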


In~\cref{cor: homog discerete Goodwillie} we leverage~\cref{main thm 2} to show that there is a Quillen adjunction between homogeneous functors in discrete calculus and homogeneous functors in Goodwillie calculus, which is, in general, not a Quillen equivalence, i.e., these calculi are homotopically distinct, see~\cref{rem:different}.

In Weiss calculus~\cite{WeissOrthog, WeissErratum}, Barnes and Oman~\cite{BarnesOman} constructed an $n$-polynomial model structure as both a left Bousfield localization (motivated by having the fibrant objects the $n$-polynomial functors) and a Bousfield--Friedlander localization (motivated by having the universal degree $n$ approximation as a fibrant replacement) and showed that these model structures are identical by showing they have the same cofibrations and weak equivalences. This model structure is tested.

\begin{xxthm}[\cref{thm: n-poly tested}]
The Barnes--Oman $n$-polynomial model structure in Weiss calculus is a tested Bousfield--Friedlander localization.
\end{xxthm}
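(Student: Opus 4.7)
The plan is to exhibit an explicit set of test morphisms for the Quillen idempotent monad $T_n$ underlying Weiss's $n$-polynomial approximation and to check that they satisfy the axioms of~\cite{BBGJS2}. Write $\es{J}$ for the source category of Weiss calculus, namely finite-dimensional inner product spaces and linear isometric embeddings. By Barnes--Oman, the $n$-polynomial model structure on $\Fun(\es{J}, \T)$ is the Bousfield--Friedlander localization at $T_n := \hocolim_k \tau_n^{\circ k}$, where $\tau_n F(V)$ is a homotopy limit of $F(U \oplus V)$ over a suitable category of $(n+1)$-dimensional subspaces; the fibrant objects are precisely the $n$-polynomial functors.

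The candidate test morphisms arise via the enriched Yoneda lemma. The homotopy limit defining $\tau_n F(V)$ can be rewritten as a derived mapping space $\map_{\Fun(\es{J}, \sS)}(\gamma_n(V, -), F)$ for a functor $\gamma_n(V, -) \in \Fun(\es{J}, \sS)$ obtained as a bar-style presentation of the indexing diagram, equipped with a canonical map $\gamma_n(V, -) \to \es{J}(V, -)$ corepresenting the unit $F(V) \to \tau_n F(V)$. Letting $V$ run over a small skeleton of $\es{J}$ then yields the candidate set $T(T_n) := \{ \gamma_n(V, -) \to \es{J}(V, -) \}_V$.

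To verify that this set tests the Bousfield--Friedlander localization, I would argue as follows. By the corepresentation above, a fibrant $F \in \Fun(\es{J}, \T)$ satisfies $F \simeq \tau_n F$ if and only if $\map(\gamma_n(V, -), F) \to \map(\es{J}(V, -), F) \simeq F(V)$ is a weak equivalence for every $V$, i.e., if and only if it is local with respect to each element of $T(T_n)$. Any remaining smallness and cofibrancy requirements from~\cite{BBGJS2} are immediate: $\gamma_n(V, -)$ and $\es{J}(V, -)$ are projectively cofibrant, and the indexing is over a set.

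The main obstacle is the bar-style rewriting itself: producing $\gamma_n(V, -)$ together with a natural comparison to $\es{J}(V, -)$ whose corepresented transformation is precisely the Barnes--Oman unit $F \to \tau_n F$. Once this construction is in place, matching $T_n$-locality with $T(T_n)$-locality is a formal Yoneda argument, and the theorem follows by invoking~\cref{main thm: LBL}.
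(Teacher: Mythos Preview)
Your candidate set of test morphisms is essentially the same as the one used in the paper: the map $\gamma_n(V,-)\to\es{J}(V,-)$ you obtain by ``bar-style'' corepresentation of $\tau_n$ is nothing other than
\[
\underset{0\neq U\subseteq \bb{k}^{n+1}}{\hocolim}\ \vect{\bb{k}}(V\oplus U,-)\ \longrightarrow\ \vect{\bb{k}}(V,-),
\]
so at the level of the proposed set $T(P_n)$ the two approaches agree. The difficulty is not in producing this set but in verifying \cref{def: tested}, and here your argument has a genuine gap.

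The condition ``tested'' in \cref{def: tested} is a statement about \emph{every} projective fibration $F\to G$: one must show that the square
\[
\begin{tikzcd}
F \ar[r]\ar[d] & P_nF \ar[d]\\ G \ar[r] & P_nG
\end{tikzcd}
\]
is a homotopy pullback if and only if, for every test morphism $X\to Y$, the square
\[
\begin{tikzcd}
F^Y \ar[r]\ar[d] & F^X \ar[d]\\ G^Y \ar[r] & G^X
\end{tikzcd}
\]
is a homotopy pullback. You only treat the case $G=\ast$, i.e.\ you identify the \emph{fibrant objects}, by observing that $F$ is $T(P_n)$-local iff $F(V)\to\tau_nF(V)$ is an equivalence for all $V$. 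With your choice of test morphisms the second square above is precisely the $\tau_n$-square, so what is actually needed is the biconditional ``the $P_n$-square is a homotopy pullback iff the $\tau_n$-square is'' for an arbitrary fibration $F\to G$. This is exactly the nontrivial step the paper isolates and proves by the argument of~\cite[Proposition~9.2]{BBGJS2}; it genuinely uses that $P_n$ is a sequential homotopy colimit of iterates of $\tau_n$ and that such colimits interact well with homotopy pullbacks. Your sketch never addresses this, and invoking \cref{main thm: LBL} at the end is circular: that theorem takes ``tested'' as a hypothesis, it does not help establish it.
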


\subsection*{Acknowledgements}
This work greatly benefited from (and probably would not exists if it were not for) conversations with Julie Bergner, Rihannon Griffiths, Brenda Johnson and Rekha Santhanam. We thank the referee for a number of helpful comments which greatly improved this article. The author would like to thank the Isaac Newton Institute for Mathematical Sciences, Cambridge, for support and hospitality during the programme \emph{Equivariant homotopy theory in context}, where work on this paper was undertaken. This work was supported by EPSRC grant EP/Z000580/1. The author was supported by the Nederlandse Organisatie voor Wetenschappelijk Onderzoek (Dutch Research Council) Vidi grant no VI.Vidi.203.004 and, during the final stages of this project, by the Engineering and Physical Sciences Research Council (EP/Z534705/1).

\section{Bousfield--Friedlander localization as a left Bousfield localization}
Let $\es{M}$ be a right proper model category. A homotopical endofunctor $Q: \es{M} \to \es{M}$ is a \emph{Quillen idempotent monad}\footnote{A Quillen idempotent monad is also sometimes called a \emph{Bousfield endofunctor}.} if $Q$ preserves homotopy pullback squares, and if there exists a natural transformation $\eta: \bb{1} \to Q$ such that both induced natural transformations $\eta_Q, Q(\eta): Q \to Q^2$ are levelwise weak equivalences. Given this data, there exists a model structure on $\es{M}$ in which the weak equivalences are those $X \to Y$ such that $QX \to QY$ is a weak equivalence in $\es{M}$, and the fibrations are those fibrations $X \to Y$ of $\es{M}$ such that the canonical square
\[\begin{tikzcd}
	X & QX \\
	Y & QY
	\arrow["\eta", from=1-1, to=1-2]
	\arrow[from=1-1, to=2-1]
	\arrow[from=1-2, to=2-2]
	\arrow["\eta"', from=2-1, to=2-2]
\end{tikzcd}\]
is a homotopy pullback square. We denote this new model structure by $\es{M}_Q$ and refer to it as the \emph{Bousfield--Friedlander localization of} $\es{M}$. The original version of this result was proven by Bousfield and Friedlander~\cite[Theorem A.7]{BousfieldFriedlander} and generalised by Bousfield~\cite[Theorem 9.7]{Bousfield}. The version we have stated above is due to Bandklayder, Bergner, Griffiths, Johnson and Santhanam~\cite[Theorem 1.1]{BBGJS2} and crucially uses right properness.

For the rest of this note, we will let $\es{C}$ be an essentially small simplicial category, $\es{D}$ a simplicial right proper model category and will denote by $\Fun(\es{C}, \es{D})$ the category of simplicial functors from $\es{C}$ to $\es{D}$. Without further comment we will equip $\Fun(\es{C}, \es{D})$ with the projective model structure, which exists by~\cite[Proposition A.3.3.2]{HTT}, see also~\cite{Moser}. With these assumptions we can introduce the notion of a \emph{tested} Quillen idempotent monad or equivalently a tested Bousfield--Friedlander localization, as originally introduced in the work of Bandklayder, Bergner, Griffiths, Johnson and Santhanam~\cite[Definition 8.1]{BBGJS2}.  

Since $\es{D}$ is a simplicial model category, it is cotensored over the category $\sS$ of simplicial sets. However, the functor category $\Fun(\es{C},\es{D})$ need not be cotensored over the functor category $\Fun(\es{C}, \sS)$. This error can be corrected via the evaluated cotensor: for $F: \es{C} \to \es{D}$ and $X: \es{C} \to \sS$, we define the \emph{evaluated cotensor} $F^X$ as the equaliser
\begin{equation}\label{def: eval cotensor}
 F^X = \int_{C \in \es{C}} FC^{XC} = \sf{eq} \left( \begin{tikzcd}
	{\underset{C \in \es{C}}{\prod} FC^{XC}} & {\underset{C,C' \in \es{C}}{\prod} (FC'^{XC})^{\es{C}(C,C')}}
	\arrow[shift left, from=1-1, to=1-2]
	\arrow[shift right, from=1-1, to=1-2]
\end{tikzcd}\right).   
\end{equation}

A comprehensive account of the properties and usefulness of the evaluated cotensor construction are given in~\cite{BBGJS1,BBGJS2}.

\begin{definition}\label{def: tested}
A Quillen idempotent monad $Q$ on $\Fun(\es{C},\es{D})$ is said to be \emph{tested} if there exists a set $T(Q)$ of morphisms in $\Fun(\es{C}, \sS)$ such that, for each fibration $F \rightarrow G$ in $\Fun(\es{C},\es{D})$, the diagram
\[\begin{tikzcd}
	F & QF \\
	G & QG
	\arrow[from=1-1, to=1-2]
	\arrow[from=1-1, to=2-1]
	\arrow[from=1-2, to=2-2]
	\arrow[from=2-1, to=2-2]
\end{tikzcd}\]
is a homotopy pullback in $\Fun(\es{C},\es{D})$ if and only if, for every $X \to Y$ in $T(Q)$ the diagram
\[\begin{tikzcd}
	{F^Y} & {F^X} \\
	{G^Y} & {G^X}
	\arrow[from=1-1, to=1-2]
	\arrow[from=1-1, to=2-1]
	\arrow[from=1-2, to=2-2]
	\arrow[from=2-1, to=2-2]
\end{tikzcd}\]
is a homotopy pullback in $\es{D}$. We will refer to the set $T(Q)$ as a set of \emph{test morphisms} and a Bousfield--Friedlander localization with a set of test morphisms will be called \emph{tested}.
\end{definition}

We now show that a tested Bousfield--Friedlander localization may be equivalently described as a left Bousfield localization ``at'' the set of test morphisms, where ``at'' has to be suitably interpreted since the test morphisms are not morphisms in the correct category. 

The proof makes use of what we call \emph{cofibrant homotopy generators}. These were introduced by Dugger~\cite{Dugger} and are a set $\es{G}$ of cofibrant objects that detect weak equivalences in the sense that $X \to Y$ is a weak equivalence in $\es{M}$ if and only if 
\[
\Map_\es{M}(A,X) \longrightarrow \Map_\es{M}(A, Y)
\]
is a weak equivalence of simplicial sets. For instance the set $\{S^n \mid n \geq 0\}$ of simplicial spheres is a set of cofibrant homotopy generators for the Kan-Quillen model structure on simplicial sets. We state and prove the following result for the projective model structure on $\Fun(\es{C}, \es{D})$ but the proof makes it clear that it holds in any model structure on $\Fun(\es{C}, \es{D})$ which is proper and cellular.

\begin{thm}\label{thm: BF and LBL}
Let $Q$ be a tested Quillen idempotent monad on $\Fun(\es{C}, \es{D})$. If $\es{D}$ is equipped with a proper cellular simplicial model structure, then there exists a set $T_\es{D}(Q)$ of morphisms in $\Fun(\es{C}, \es{D})$ such that the Bousfield--Friedlander localization $\Fun(\es{C}, \es{D})_Q$ is the left Bousfield localization of the projective model structure on $\Fun(\es{C}, \es{D})$ at the set $T_\es{D}(Q)$.
\end{thm}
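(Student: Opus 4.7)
The plan is to construct $T_\es{D}(Q)$ by tensoring the test morphisms in $T(Q)$ with a set of cofibrant homotopy generators of $\es{D}$, and then to verify that the Bousfield--Friedlander localization $\Fun(\es{C}, \es{D})_Q$ and the left Bousfield localization $L_{T_\es{D}(Q)} \Fun(\es{C}, \es{D})$ share both their cofibrations and their fibrant objects. The bridge between the two descriptions is the adjunction
\[
\Map_{\es{D}}(D, F^X) \;\cong\; \Map_{\Fun(\es{C}, \es{D})}(X \otimes D, F),
\]
where $X \otimes D : \es{C} \to \es{D}$ denotes the simplicial functor $C \mapsto XC \otimes D$; this is immediate on unpacking the end defining $F^X$ and using the simplicial enrichment of $\es{D}$.

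Fix a set $G$ of cofibrant homotopy generators for $\es{D}$, which exists by cellularity (resp. combinatoriality), and, writing $X^c$ for a functorial projective cofibrant replacement in $\Fun(\es{C}, \sS)$, set
\[
T_\es{D}(Q) \;:=\; \bigl\{\, X^c \otimes D \longrightarrow Y^c \otimes D \;:\; (X \to Y) \in T(Q),\ D \in G \,\bigr\}.
\]
To identify fibrant objects I apply the tested condition to the terminal fibration $F \to \ast$ with $F$ projectively fibrant. Since $Q$ preserves homotopy pullbacks and the terminal object, the BF homotopy pullback square collapses to $F \to QF$ being a weak equivalence, i.e., $F$ being $Q$-local; in parallel, the cotensor square collapses to $F^Y \to F^X$ being a weak equivalence in $\es{D}$ for every $(X \to Y) \in T(Q)$. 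Since $G$ detects weak equivalences between fibrant objects of $\es{D}$, the latter condition is, by the displayed adjunction, equivalent to $\Map(Y^c \otimes D, F) \to \Map(X^c \otimes D, F)$ being a weak equivalence for every $D \in G$, which is precisely $T_\es{D}(Q)$-locality of $F$.

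Both model structures therefore inherit the projective cofibrations and agree on fibrant objects, so they coincide by the standard fact that a model structure on a fixed category is determined by its cofibrations together with its class of fibrant objects. The main technical obstacle is the homotopical bookkeeping around the evaluated cotensor: one must check that $F^{X^c} \to F^X$ is a weak equivalence whenever $F$ is projectively fibrant, so that the tested condition transports along cofibrant replacement of the test morphisms; that $X^c \otimes D$ is genuinely projectively cofibrant in $\Fun(\es{C},\es{D})$ when $D$ is cofibrant in $\es{D}$, so that the displayed adjunction computes the correct derived mapping spaces; and that $Q$ acts trivially on the terminal object, so that the reduction of the BF square for $F \to \ast$ to $F \to QF$ is legitimate. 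Once these are in place the argument is essentially formal.
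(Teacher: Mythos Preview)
Your approach is essentially the paper's: define $T_\es{D}(Q)$ by tensoring test morphisms with cofibrant homotopy generators, then match cofibrations and fibrant objects via the adjunction $\Map_{\es{D}}(D, F^X) \cong \Map_{\Fun(\es{C},\es{D})}(X \otimes D, F)$. The cofibrant replacement $X^c$ you insert is reasonable homotopical hygiene that the paper leaves implicit.

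One point deserves correction: you do \emph{not} need $Q$ to preserve the terminal object. You invoke this to reduce ``BF-fibrant'' to ``$F \to QF$ is a weak equivalence'', but that reduction is unnecessary. The tested condition already says, directly, that the square
\[
\begin{tikzcd}
F \ar[r]\ar[d] & QF \ar[d] \\ \ast \ar[r] & Q(\ast)
\end{tikzcd}
\]
is a homotopy pullback if and only if each
\[
\begin{tikzcd}
F^Y \ar[r]\ar[d] & F^X \ar[d] \\ \ast^Y \ar[r] & \ast^X
\end{tikzcd}
\]
is; since $\ast^Y \simeq \ast \simeq \ast^X$ by the universal property of the evaluated cotensor, the latter square is a homotopy pullback if and only if $F^Y \to F^X$ is a weak equivalence. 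This is exactly how the paper argues, and it makes no reference to $Q(\ast)$. So you can drop the third of your listed ``technical obstacles'' entirely.
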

\begin{proof}
Define
\[
T_\es{D}(Q)  =\{ G \otimes X \to G \otimes Y \mid G \in \es{G}, X \to Y \in T(Q)\},
\]
where $\es{G}$ is a set of cofibrant homotopy generators for $\es{D}$ in the sense of~\cite[Proposition A.5]{Dugger}, see also~\cite[Lemma 3.4.5]{Balchin}, and $G \otimes X$ is given by $(G \otimes X)(C) = G \otimes X(C)$, where $\otimes$ denotes the tensoring of $\es{D}$ over simplicial sets. The existence of the set $T_\es{D}(Q)$ follows from the assumption that $\es{D}$ is left proper and cofibrantly generated. The assumptions on $\es{D}$ guarantee that the projective model structure on $\Fun(\es{C}, \es{D})$ is left proper and cellular by combining~\cite[Proposition 4.1.5]{Hirschhorn} with~\cite[Theorem 11.7.3]{Hirschhorn}. As such,~\cite[Theorem 4.1.1]{Hirschhorn} yields that the left Bousfield localization of $\Fun(\es{C}, \es{D})$ at the set $T_\es{D}(Q)$ exists.

It is left to show that the left Bousfield localized model structure agrees with the Bousfield--Friedlander model structure. Both model structures have the same cofibrations, namely those of the underlying model structure on $\Fun(\es{C},\es{D})$, so it suffices by~\cite[Proposition 2.1.11]{Balchin} to show that they have the same fibrant objects. 

The fibrant objects in the left Bousfield localization are the $T_\es{D}(Q)$-local functors: a functor $F:\es{C} \to \es{D}$ is $T_\es{D}(Q)$-local if and only if $F$ is underlying fibrant and the induced map
\[
\Map_{\Fun(\es{C}, \es{D})}(G \otimes Y, F) \longrightarrow \Map_{\Fun(\es{C}, \es{D})}(G \otimes X, F) 
\]
for $G \otimes X \to G \otimes Y$ in $T_\es{D}(Q)$ is a weak equivalence of simplicial sets by~\cite[Example 17.1.4]{Hirschhorn}.
By adjunction~\cite[Proposition 2.13]{BBGJS2}, this is equivalent to asking that the map
\[
\Map_{\es{D}}(G, F^Y) \longrightarrow \Map_{\es{D}}(G, F^X),
\]
be a weak equivalence of simplicial sets. Since $\es{G}$ is a set of cofibrant homotopy generators, it follows from~\cite[Proposition A.5]{Dugger} that the above map is a weak equivalence if and only if $F^Y \to F^X$ is a weak equivalence. Hence a functor $F$ is $T_\es{D}(Q)$-local if and only if $F$ is underlying fibrant and $F^Y \to F^X$ is a weak equivalence for every $X \to Y$ in $T(Q)$.

On the other hand, $F$ is fibrant in the Bousfield--Friedlander localization if and only if $F$ is underlying fibrant and the square
\[\begin{tikzcd}
	F & QF \\
	\ast & {Q(\ast)}
	\arrow[from=1-1, to=1-2]
	\arrow[from=1-1, to=2-1]
	\arrow[from=1-2, to=2-2]
	\arrow[from=2-1, to=2-2]
\end{tikzcd}\]
is a homotopy pullback square. By the existence of a set $T(Q)$ of test morphisms, this is equivalent to the square 
\[\begin{tikzcd}
	{F^Y} & {F^X} \\
	{\ast^Y} & {\ast^X}
	\arrow[from=1-1, to=1-2]
	\arrow[from=1-1, to=2-1]
	\arrow[from=1-2, to=2-2]
	\arrow[from=2-1, to=2-2]
\end{tikzcd}\]
being a homotopy pullback square for all test morphisms $X \to Y$. 
Since $\ast^X =\ast$ by the definition of the evaluated cotensor given in~\eqref{def: eval cotensor}, the bottom map is the unique map $\ast \to \ast$ and hence by~\cite[Proposition 2.3]{BBGJS2} the top horizontal map is an equivalence.  Hence both notions of fibrant objects agree.
\end{proof}

An application of~\cite[Theorem 4.1.1]{Hirschhorn} yields the following corollary, which strengthens the main theorem of~\cite{BBGJS2}. The fact that the Bousfield--Friedlander localization is a left Bousfield localization produces a left proper cellular simplicial model category, whereas the Bousfied--Friedlander localization itself produces a right proper cofibrantly generated simplicial model category, see~\cite[Theorem 1]{BBGJS2}. In general, left Bousfield localizations need not be right proper, see e.g.,~\cite[Remark 7.1 ]{TaggartLocalizations}. To perform subsequent localizations, e.g., in~\cref{thm: homog as RBL}, we thus require both perspectives.

\begin{cor}\label{cor: cof gen}
Let $Q$ be a tested Quillen idempotent monad on $\Fun(\es{C}, \es{D})$. If $\es{D}$ is equipped with a proper cellular simplicial model structure, then the Bousfield--Friedlander localization $\Fun(\es{C}, \es{D})_Q$ is a proper cellular simplicial model category. In particular, the Bousfield--Friedlander localization is cofibrantly generated.
\end{cor}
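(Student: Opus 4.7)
The plan is to chain the identification provided by the preceding theorem with a standard existence result for left Bousfield localizations. Specifically, \cref{thm: BF and LBL} identifies the Bousfield--Friedlander localization $\Fun(\es{C}, \es{D})_Q$ with the left Bousfield localization of the projective model structure on $\Fun(\es{C}, \es{D})$ at the set $T_\es{D}(Q)$. It therefore suffices to check that this left Bousfield localization inherits from the underlying model structure all the structural properties claimed in the corollary.

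First I would recall from the proof of \cref{thm: BF and LBL} that the projective model structure on $\Fun(\es{C}, \es{D})$ is itself proper, cellular (resp. combinatorial), and simplicial; this is exactly the combination of \cite[Proposition 4.1.5]{Hirschhorn} and \cite[Theorem 11.7.3]{Hirschhorn} already invoked there. Next I would apply \cite[Theorem 4.1.1]{Hirschhorn} directly: it asserts that the left Bousfield localization of a left proper cellular (resp. combinatorial) simplicial model category at a \emph{set} of morphisms exists and is again left proper, cellular (resp. combinatorial), and simplicial. Since cellular and combinatorial model categories are in particular cofibrantly generated, the ``in particular'' clause of the corollary is then automatic.

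The only piece of the statement not covered directly by Hirschhorn's theorem is right properness, as left Bousfield localizations preserve only \emph{left} properness in general. I expect this to be the main (and essentially only) obstacle, but it is resolved by a soft argument: right properness of $\Fun(\es{C}, \es{D})_Q$ is built into the Bousfield--Friedlander construction itself, forming part of the output of the existence theorem for such localizations recalled at the start of Section~2. Because \cref{thm: BF and LBL} identifies the left Bousfield localized model structure with the Bousfield--Friedlander one on the nose, right properness transfers to the localized model category, and assembling the two sources of structural input yields the corollary.
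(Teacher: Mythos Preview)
Your proposal is correct and follows the same route as the paper, which simply says that the corollary follows from \cite[Theorem 4.1.1]{Hirschhorn} after the identification in \cref{thm: BF and LBL}. Your treatment is in fact more careful: you explicitly separate out right properness, which Hirschhorn's theorem does not provide, and source it from the Bousfield--Friedlander side via the identification of model structures, whereas the paper's one-line justification leaves this point implicit.
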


\begin{rem}\label{rem: hypotheses}
Our hypotheses are a little different to those of Bandklayder, Bergner, Griffiths, Johnson and Santhanam~\cite[Theorem 8.3]{BBGJS2}. Firstly, because of the known existence results for left Bousfield localizations~\cite[Theorem 4.1.1]{Hirschhorn} we have to require that $\es{D}$ is left proper and cellular. Our right properness assumption is only used in an application of ~\cite[Proposition 2.3]{BBGJS2}. On the other hand Theorem $8.3$ of \emph{loc. cit.} has many assumptions (see Conventions $2.8$, $3.1$, $5.7$, and $6.2$ of \emph{loc. cit.}) but upon close examination of the proof the authors only use that $\es{D}$ is a cofibrantly generated right proper simplicial model category such that the set $J_\es{D}$ of generating cofibrations of $\es{D}$ may be chosen such that the codomain of each map is finite relative to $J_\es{D}$, and that $\Fun(\es{C}, \es{D})$ is cofibrantly generated right proper with fibrations also projective fibrations. This overstatement of the conventions required for Theorem $8.3$ of \emph{loc. cit.} follows from the intended applications, i.e., these assumptions are required when considering the homotopy functor and degree $n$ model structures. Note also that our proof does not require the fibrations of $\Fun(\es{C}, \es{D})$ to be projective fibrations. This assumption in \emph{loc. cit.} is an artifact of the method of proof via explicitly constructing a set of generating acyclic cofibrations.
\end{rem}

\section{An application to functor calculus}
Recall from the introduction that to a functor $F$, a functor calculus assigns a tower of functors approximating $F$ in a way analogous to the Taylor series for a function. The functor $P_nF$ is the \emph{universal degree $n$} functor under $F$, where the definition of degree $n$ is dependent on the choice of calculus. We will always assume that $P_nF$ is fibrant, since this can be arranged~\cite{BBGJS2}. Throughout this section we will assume the existence of such a calculus and a corresponding degree $n$ model structure which comes to us as a tested Bousfield--Friedlander localization. In all examples this Bousfield--Friedlander localization is with respect to the universal degree $n$ approximation $P_n$.

The Bousfield--Friedlander localization of a suitable model structure on a category of functors yields a model structure in which the fibrant objects are precisely the degree $n$ functors. This approach has seen success in Goodwillie calculus by  Biedermann, Chorny, and R\"ondigs~\cite{BiedermannChornyRondigs, BiedermannRondigs} and in Weiss calculus by Barnes and Oman~\cite{BarnesOman}. The seminal work of Bandklayder, Bergner, Griffiths, Johnson and Santhanam~\cite{BBGJS2} shows that the Bousfield--Friedlander localization in Goodwillie calculus is tested, and extends the theory to construct a degree $n$ model structure for the discrete calculus of Bauer, Johnson, and McCarthy~\cite{BauerJohnsonMcCarthy} as a tested Bousfield--Friedlander localization. In~\cref{sec: Weiss calc} we show that the corresponding localization in Weiss calculus is also tested. We summarise these results here.

\begin{lem}\label{lem: existence of degree n tested}
Let $\es{C}$ be an essentially small simplicial category and let $\es{D}$ be a proper cellular simplicial model category.
\begin{enumerate}
    \item Let $\es{C}$ be a full simplicial subcategory of a cofibrantly generated model category on cofibrant and simplicially finitely presentable objects which is closed under finite homotopy colimits and has a terminal object. Moreover, assume that the functorial fibrant replacement of an object in $\es{C}$ is a sequential colimit of objects in $\es{C}$. Let $\es{D}$ be such that weak equivalences, fibrations and homotopy pullbacks are preserved under sequential colimits. Then the $n$-excisive model structure for Goodwillie calculus is a tested Bousfield--Friedlander localization. In particular, the $n$-excisive model structure is a left Bousfield localization. 
    \item If $\es{C}$ has finite coproducts and a terminal object and $\es{D}$ is a stable model category, then the degree $n$ model structure for Goodwillie calculus is a tested Bousfield--Friedlander localization. In particular, the degree $n$ model structure is a left Bousfield localization.
\end{enumerate}
\end{lem}
\begin{proof}
The model structures are tested Bousfield--Friedlander localizations by~\cite[Proposition 9.2]{BBGJS2} and~\cite[Lemma 10.3]{BBGJS2}. The left Bousfield localization description follows from~\cref{cor: cof gen} since $\es{D}$ is additionally left proper and cellular.
\end{proof}

\begin{rem}
The assumption of the first part of~\cref{lem: existence of degree n tested} are those of~\cite[Proposition 9.2]{BBGJS2} slightly altered to take into account the fact that we want to identify everything as a left Bousfield localization. For the actual statement of \emph{loc. cit.} one requires Conventions 6.2, 5.7, 3.1, and 2.8 of~\cite{BBGJS2}. Some of these we have been able to cut down thank to assuming that $\es{D}$ is a proper cellular simplicial model category, e.g., ~\cite[Convention 6.2]{BBGJS2} asserts that the set $J$ of generating cofibrations of $\es{D}$ can be chosen such that the codomain of each map is finite relative to $J$. This is immediate from $\es{D}$ being cellular, see e.g.,~\cite[Theorem 12.4.4]{Hirschhorn}. It is certainly possible to imagine a world where some of the hypotheses on $\es{C}$ are unnecessary but we do not pursue this here.
\end{rem}

A functor $F: \es{C} \to \es{D}$ is $n$-homogeneous if it is degree $n$ (i.e., $F \to P_nF$ is an equivalence) and \emph{$n$-reduced} (i.e., the degree $(n-1)$ approximation $P_{n-1}F$ is trivial). By utilising the description of a tested Bousfield--Friedlander localization as a left Bousfield localization in~\cref{thm: BF and LBL} we can produce a model structure in which the bifibrant objects are the $n$-homogeneous functors.

\begin{thm}\label{thm: homog as RBL}
Let $\es{D}$ be a proper cellular simplicial model category. If the degree $n$ and degree $(n-1)$ model structures exist as tested Bousfield--Friedlander localizations, then there is a model structure on $\Fun(\es{C}, \es{D})$ in which the cofibrant objects are the underlying cofibrant $n$-reduced functors and the fibrations are the fibrations of the degree $n$ model structure. In particular the bifibrant objects are the bifibrant $n$-homogeneous functors. We call this the $n$-homogeneous model structure.
\end{thm}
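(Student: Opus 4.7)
The plan is to realise the $n$-homogeneous model structure as a right Bousfield localization of the degree $n$ model structure. By \cref{cor: cof gen}, this degree $n$ model structure is proper, cellular (resp.\ combinatorial), and simplicial, so Hirschhorn's right Bousfield localization theorem~\cite[Theorem 5.1.1]{Hirschhorn} is available, provided we localize at an honest set of objects.

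The natural choice is a set $\mathcal{K}_n$ of underlying cofibrant $n$-homogeneous functors that homotopy-generates the $n$-homogeneous subcategory. Concretely, and in the spirit of Barnes--Oman's treatment of Weiss calculus~\cite{BarnesOman}, I would take cofibrant replacements of the ``$n$-th derivatives'' $D_n X := \hofibre(P_n X \to P_{n-1}X)$ as $X$ ranges over the codomains of a set of generating cofibrations of the projective model structure on $\Fun(\es{C},\es{D})$. That this yields a set (rather than a proper class) is exactly what the cellularity (resp.\ combinatoriality) assumption buys us.

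Applying~\cite[Theorem 5.1.1]{Hirschhorn} then produces the desired model structure: by construction its fibrations agree with those of the degree $n$ model structure, giving the second clause of the statement. The main obstacle will be the identification of the $\mathcal{K}_n$-cofibrant objects with the underlying cofibrant $n$-reduced functors. For the forward inclusion, each generator of $\mathcal{K}_n$ is $n$-reduced and one must show that the class of $n$-reduced functors is closed under the transfinite compositions of (degree $n$-)cofibrations that assemble $\mathcal{K}_n$-cell complexes; this is where the ``modest hypotheses'' enter the proof, and in practice amount to $P_{n-1}$ behaving well with respect to the relevant filtered (homotopy) colimits. For the reverse inclusion, every underlying cofibrant $n$-reduced functor admits a $\mathcal{K}_n$-cofibrant approximation via the small object argument built into~\cite[Theorem 5.1.1]{Hirschhorn}, and the design of $\mathcal{K}_n$ to detect $n$-homogeneity ensures that this approximation is already a weak equivalence in the degree $n$ model structure.

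With the cofibrant objects thus identified, the bifibrancy claim is immediate: an object is bifibrant precisely when it is a degree $n$ functor (fibrant, since fibrant objects are unchanged by right Bousfield localization) and an underlying cofibrant $n$-reduced functor (cofibrant, by the identification above), i.e., an $n$-homogeneous functor in the sense of the paper.
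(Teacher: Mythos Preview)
Your overall strategy---realise the $n$-homogeneous model structure as a right Bousfield localization of the degree $n$ model structure via \cite[Theorem 5.1.1]{Hirschhorn}---is exactly what the paper does, and your observation that the fibrations are then automatic is correct. The substantive divergence is in the choice of localizing set and, consequently, in the identification of cofibrant objects.

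The paper does not localize at derivatives $D_nX$ of generating objects. Instead it takes the homotopy cofibres of the \emph{degree $(n-1)$} test morphisms, tensored with the set $\es{G}$ of cofibrant homotopy generators of $\es{D}$:
\[
K_\es{D}(P_{n-1}) = \{\, G \otimes \hocof(T) \mid G \in \es{G},\ T \in T(P_{n-1})\,\}.
\]
This choice is the point of the theorem: it ties the homogeneous model structure back to the tested structure, and it makes the key step---that any degree $(n-1)$ functor is trivial in the localized model structure---a one-line fibre-sequence argument using the defining property of $T(P_{n-1})$. With that in hand, the paper identifies the cofibrations (not just cofibrant objects) by characterising the acyclic fibrations between degree $n$ functors via a homotopy-pullback argument, in the style of \cite[Lemma 8.5, Proposition 8.3]{TaggartUnitary}. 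No commutation of $P_{n-1}$ with filtered colimits is needed.

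Your route, by contrast, rests on two unsubstantiated steps. First, the ``forward inclusion'' asks that $n$-reduced functors be closed under the colimits building $\mathcal{K}_n$-cell complexes, which you acknowledge needs $P_{n-1}$ to interact well with filtered homotopy colimits; this is an extra hypothesis the paper's argument avoids entirely. Second, and more seriously, the ``reverse inclusion'' asserts that the $\mathcal{K}_n$-cofibrant approximation of an underlying cofibrant $n$-reduced functor is already a degree $n$ equivalence. This is exactly the hard part, and ``the design of $\mathcal{K}_n$ to detect $n$-homogeneity'' is not an argument: you would need to show that your set $\{D_nX\}$ actually generates all $n$-homogeneous functors under homotopy colimits, which is far from clear for $X$ ranging only over codomains of projective generating cofibrations. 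The paper's choice of $K_\es{D}(P_{n-1})$ and its acyclic-fibration argument sidestep both issues.
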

\begin{proof}
Let $T(P_n)$ denote the set of test morphisms for the degree $n$ model structure. By~\cref{thm: BF and LBL}, the degree $n$ model structure is the left Bousfield localization at the set 
\[
T_\es{D}(P_n) = \{G \otimes T  \mid G \in \es{G}, T \in T(P_n) \},
\]
where $\es{G}$ is a set of cofibrant homotopy generators for $\es{D}$. It follows from~\cref{cor: cof gen} that the degree $n$ model structure is a right proper cellular model category. Let 
\[
K(P_{n-1}) = \{\hocof(T) \mid T \in T(P_{n-1})\}
\]
be the set of homotopy cofibres of the set of test morphisms for the degree $(n-1)$ model structure, and let 
\[
K_\es{D}(P_{n-1}) = \{G \otimes K \mid  G \in \es{G}, K \in K(P_n)\}.
\]
We claim that the required model structure is the right Bousfield localization of the degree $n$-model structure at the set $K_\es{D}(P_{n-1})$. The right Bousfield localization exists by~\cite[Theorem 5.1.1]{Hirschhorn} and the fibrations follow immediately from \emph{loc. cit.}, hence it suffices to show that the cofibrant objects are the underlying cofibrant $n$-reduced functors. 

We will show that a map $E \to F$ is a cofibration in the $n$-homogeneous model structure if and only if $E \to F$ is a underlying cofibration and a weak equivalence in the degree $(n-1)$ model structure. By definition a map $E \to F$ is an n-homogeneous cofibration if and only if it has the left lifting property with respect to $n$-homogeneous acyclic fibrations and a lifting argument analogous to~\cite[Lemma 8.5]{TaggartUnitary}, implies that it suffices to show that a map $X \to Y$  between degree $n$ functors is an acyclic fibration in the $n$-homogeneous model structure if and only if $X \to Y$ is a fibration between degree $n$ functors in the degree $(n-1)$ model structure.

Arguing analogously to~\cite[Proposition 8.3]{TaggartUnitary} we see that a map $X \to Y$ is an acyclic fibration in the $n$-homogeneous model structure if and only if $X \to Y$ is a fibration in the degree $(n-1)$ model structure and a weak equivalence in the $n$-homogeneous model structure. So to complete the proof it suffices to show that if $X \to Y$ is a fibration in the degree $(n-1)$ model structure between degree $n$ functors, then $X \to Y$ is a weak equivalence in the $n$-homogeneous model structure.

Consider the commutative diagram
\[\begin{tikzcd}
	X & {P_nX} & {P_{n-1}X} \\
	Y & {P_nY} & {P_{n-1}Y}
	\arrow["\simeq", from=1-1, to=1-2]
	\arrow[from=1-1, to=2-1]
	\arrow[from=1-2, to=1-3]
	\arrow[from=1-2, to=2-2]
	\arrow[from=1-3, to=2-3]
	\arrow["\simeq", from=2-1, to=2-2]
	\arrow[from=2-2, to=2-3]
\end{tikzcd}\]
in which the left-most horizontal maps are equivalences since $X$ and $Y$ are assumed degree $n$, and hence the left square is a homotopy pullback square. Since $X \to Y$ is assumed to be a fibration in the degree $(n-1)$-model structure the outer-most square is a homotopy pullback diagram, thus the right-most square is also a homotopy pullback by definition of the Bousfield--Friedlander localized model structure. Since $\Fun(\es{C}, \es{D})$ is right proper, it now suffices to show that the map $P_{n-1}X \to P_{n-1}Y$ is a weak equivalence in the $n$-homogeneous model structure.

By the general theory of right Bousfield localizations, a map $X \to Y$ is a weak equivalence in the $n$-homogeneous model structure if and only if the induced map
\[
\Map_{\Fun(\es{C}, \es{D})}(G \otimes K, P_nX) \longrightarrow \Map_{\Fun(\es{C}, \es{D})}(G \otimes K, P_nY),
\]
is a weak equivalence of simplicial sets. The map $P_{n-1}X \to P_{n-1}Y$ is a weak equivalence in the $n$-homogeneous model structure since degree $(n-1)$ functors are trivial in the $n$-homogeneous model structure. Indeed, for every $T: A \to B$ in $T(P_{n-1})$, there is a homotopy fibre sequence
\[
\Map_{\Fun(\es{C}, \es{D})}(G \otimes \hocof(T), F) \longrightarrow \Map_{\Fun(\es{C}, \es{D})}(G \otimes B, F) \longrightarrow \Map_{\Fun(\es{C}, \es{D})}(G \otimes A, F),
\]
in which the right-most map is an equivalence if $F$ is a degree $(n-1)$ functor by the fact that $T_\es{D}(P_{n-1})$ is a set of test morphisms for the degree $(n-1)$ model structure. It follows that for every $G \otimes \hocof(T) \in K_\es{D}(P_{n-1})$ the simplicial set $\Map_{\Fun(\es{C}, \es{D})}(G \otimes \hocof(T), F)$ is contractible and hence $F$ is trivial in the $n$-homogeneous model structure.
\end{proof}

We now specialise to two particular calculi, namely the discrete calculus of Bauer, Johnson and McCarthy~\cite{BauerJohnsonMcCarthy}, and Goodwillie calculus~\cite{GoodCalcIII}. These calculi have tested degree $n$-model structures by~\cref{lem: existence of degree n tested}, and we now prove that there is a Quillen adjunction between the $n$-homogeneous model structures which cannot be a Quillen equivalence, i.e., that homotopically, these calculi are genuinely different theories. This partially answers one of the motivating questions of \cite{BBGJS2}.

We denote by $\Gamma_n$ the universal degree $n$ approximation in discrete calculus defined for instance in~\cite[Definition 3.12]{BBGJS2}. For a functor $F$, the functor $\Gamma_nF$ is the universal functor for which the $(n+1)$-st cross-effect vanish, and is a Quillen idempotent monad by~\cite[Theorem 7.2]{BBGJS2}. We denote by $P_n$ the universal $n$-excisive approximation in Goodwillie calculus, defined for instance in~\cite[Definition 3.2]{BBGJS2}. For a functor $F$, the functor $P_nF$ is the universal functor which sends strongly homotopy cocartesian $(n+1)$-cubes to homotopy cartesian diagrams and is a Quillen idempotent monad by~\cite[Theorem 5.8]{BiedermannRondigs}.

\begin{lem}\label{cor: homog discerete Goodwillie}
Under the hypotheses of~\cref{lem: existence of degree n tested}, there is a Quillen adjunction
\[
\adjunction{\id}{R_{K_{\es{D}}(\Gamma_{n-1})}(\Fun(\es{C}, \es{D})_{\Gamma_n})}{R_{K_{\es{D}}(P_{n-1})}(\Fun(\es{C}, \es{D})_{P_n})}{\id}
\]
between the $n$-homogeneous model structure for discrete calculus and the $n$-homogeneous model structure for Goodwillie calculus.
\end{lem}
\begin{proof}
By \cite[Proposition 3.3.18(1) and Theorem 3.1.6(1)]{Hirschhorn}, there is a Quillen adjunction
\[
\adjunction{\id}{\Fun(\es{C}, \es{D})_{\Gamma_n}}{\Fun(\es{C}, \es{D})_{P_n}}{\id}
\]
between the degree $n$ model structures, since
\[
\adjunction{\id}{\Fun(\es{C}, \es{D})}{\Fun(\es{C}, \es{D})_{P_n}}{\id},
\]
is a Quillen adjunction by~\cref{thm: BF and LBL}, and the right Quillen functor $\id: \Fun(\es{C}, \es{D})_{P_n} \to \Fun(\es{C}, \es{D})$ sends every $n$-excisive functor to a degree $n$ functor in discrete calculus by \cite[Proposition 3.3]{GoodCalcIII}.

By the hypothesis of~\cref{lem: existence of degree n tested}, the model category $\es{D}$ is stable, and the sets of $T_{\es{D}}(\Gamma_n)$-local and $T_\es{D}(P_n)$-local objects are closed under suspension, so the degree $n$ model structures in both calculi are stable model categories by \cite[Proposition 3.6]{BarnesRoitzheim}. We will thus construct the desired Quillen adjunction as an application of the Cellularization Principle \cite[Theorem 2.1(1)]{GreenleesShipley}.

By~\cite[Definition 10.2, Lemma 10.3]{BBGJS2}, the test morphisms for degree $(n-1)$ model structure in discrete calculus are given by
\[
T(\Gamma_{n-1}) = \left\{ \underset{S \subseteq \underline{n}}{\hocolim}~ R^{\bigvee_{i \in \underline{n}\setminus S} A}   \longrightarrow  R^{\bigvee_{i =1}^{n} A}  \mid A \in \mathsf{sk}(\es{C}) \right\},
\]
where $R^X$ is the representable functor on $X$ and $\mathsf{sk}(\es{C})$ denotes the skeleton of $\es{C}$. Note that $T(\Gamma_{n-1})$ is a set since $\es{C}$ is essentially small, and hence $\sf{sk}(\es{C})$ is a set of representatives of isomorphism classes. It follows from a routine calculation, see e.g.,~\cite[Example 5.9.4]{MunsonVolic} that
\[
K_\es{D}(\Gamma_{n-1}) = \left\{ G \otimes \bigwedge_{i=1}^n R^A \mid A \in \sf{sk}(\es{C}), \ G \in \es{G}\right\},
\]
where $\es{G}$ is a set of homotopy generators for $\es{D}$. The right Bousfield localization of the degree $n$ model structure for discrete calculus at the set $K_\es{D}(\Gamma_{n-1})$ yields the $n$-homogeneous model structure for discrete calculus by~\cref{thm: homog as RBL}. By \cite[Theorem 6.4]{BiedermannRondigs}, the right Bousfield localization of the degree $n$ model structure for Goodwillie calculus at the set $K_\es{D}(\Gamma_{n-1})$ also yields the $n$-homogeneous model structure for Goodwillie calculus.

Denote by $QK_\es{D}(\Gamma_{n-1})$ the set of cofibrant replacements of the objects of $K_\es{D}(\Gamma_{n-1})$ in the projective model structure on $\Fun(\es{C}, \es{D})$. The cellularization principle, \cite[Theorem 2.1(1)]{GreenleesShipley} provides a Quillen adjunction
\[
\adjunction{\id}{R_{QK_{\es{D}}(\Gamma_{n-1})}(\Fun(\es{C}, \es{D})_{\Gamma_n})}{R_{QK_{\es{D}}(P_{n-1})}(\Fun(\es{C}, \es{D})_{P_n})}{\id}
\]
between the right Bousfield localizations at $QK_{\es{D}}(\Gamma_{n-1})$ of the degree $n$ and $n$-excisive model structures. As such, to complete the proof, it suffices to show that there is an equality of model structures between the right Bousfield localization of a model category $\es{M}$ at a set $K$ and the right Bousfield localization of $\es{M}$ at the set $QK$. This equality follows since the fibrations of both model structures agree with the fibrations of $\es{M}$, and the weak equivalences agree since these are tested by mapping out of the objects of $K$ and $QK$ respectively and these agree on (right) homotopy function complexes, see e.g.,~\cite[Definition 17.2.1]{Hirschhorn}, since projectively cofibrant objects are cofibrant in the respective degree $n$ model structures.
\end{proof}

\begin{rem}\label{rem:different}
The under the hypotheses of~\cref{lem: existence of degree n tested}, the Quillen adjunction
\[
\adjunction{\id}{R_{K_{\es{D}}(\Gamma_{n-1})}(\Fun(\es{C}, \es{D})_{\Gamma_n})}{R_{K_{\es{D}}(P_{n-1})}(\Fun(\es{C}, \es{D})_{P_n})}{\id}
\]
between $n$-homogeneous model structures is not, in general, a Quillen equivalence. It suffices to consider the case $n=1$, and show that the derived unit $F \to P_nF$ need not be an equivalence on a $1$-homogeneous functor in discrete calculus, i.e., to exhibit a $1$-homogeneous functor in discrete calculus which is not $1$-excisive. A functor $F$ is $1$-homogeneous in discrete calculus if and only if $F(0)\simeq 0$, and the second cross-effect of $F$ vanishes. The second cross-effect is given by the total homotopy fibre of the square
\[\begin{tikzcd}[ampersand replacement=\&,cramped]
	{F(C_1 \sqcup C_2)} \& {F(C_2)} \\
	{F(C_1)} \& {F(0)}
	\arrow[from=1-1, to=1-2]
	\arrow[from=1-1, to=2-1]
	\arrow[from=1-2, to=2-2]
	\arrow[from=2-1, to=2-2]
\end{tikzcd}\]
and so vanishes if and only if the canonical map $F(C_1 \sqcup C_2) \to F(C_1) \times F(C_2)$ is an equivalence, i.e., if and only if $F$ is homotopically additive. It follows that the derived unit is a weak equivalence if and only if every reduced homotopically additive functor is excisive. This is demonstrably false: for example the functor $H_0(-)[0] :\sf{Ch}(\bb{Z}) \to \sf{Ch}(\bb{Z})$ on ($\bb{Z}$-graded) chain complexes of abelian groups, which sends a chain complex to its $0$-th homology group viewed as chain complex in degree $0$ is reduced and additive, but does not preserve homotopy cofibre sequences. Denote by $S^n$ the chain complex $\bb{Z}[n]$ consisting of a single copy of $\bb{Z}$ in degree $n$, and by $D^n$ the chain complex consisting of a copy of $\bb{Z}$ in degree $n$ and degree $n-1$ with differential $d_n$ given by the identity map. Under this functor, the homotopy cofibre sequence
\[
S^0 \hookrightarrow D^1 \longrightarrow S^1
\]
induced by the generating cofibrations of the projective model structure on chain complexes (see e.g,.~\cite[Definition 2.3.3, Theorem 2.3.11]{Hovey}) is sent to the sequence of abelian groups 
\[
\bb{Z}[0] \longrightarrow 0 \longrightarrow 0
\]
which is not a homotopy fibre sequence since the mapping cone of $\bb{Z}[1]$ is not quasi-isomorphic to $0$.
\end{rem}

\begin{rem}
The fact that both homogeneous model structures are right Bousfield localizations (of different model structures) at the same set of objects is reminiscent of the fact that the cubes used to define the colocalising objects generate all strongly homotopy cocartesian cubes under cobase change, see e.g.,\cite[Example 3.2.7 and Lemma 3.2.8]{ABFJ}, and so see enough of the required cubical homotopy theory to be meaningful in both calculi.
\end{rem}

\section{An application to Weiss calculus}\label{sec: Weiss calc}

We can now show that the polynomial model structures for Weiss calculus fit within the framework of~\cite{BBGJS2}. We will only consider Weiss calculus~\cite{WeissOrthog, WeissErratum} for functors with values in the category $\sS$ of simplicial sets since we have worked simplicially throughout this note. The proofs readily extend to the topological setting and to any known variant of Weiss calculus~\cite{TaggartUnitary, TaggartReal, TaggartLocalizations, CarrTaggart}. Extensions to other target categories form part of the work in progress of the author with Bergner, Griffiths, Johnson and Santhanam.

Denote by $\vect{\bb{k}}$ the simplicial category of finite-dimensional inner product spaces over $\bb{k}$ with linear isometric isomorphisms for $\bb{k}$ either the reals, complex numbers or quaternions. The morphism simplicial set $\vect{\bb{k}}(V,W)$ is given by applying the singular simplicial set functor to the Stiefel manifold of $\dim(V)$-frames in $W$.

\begin{definition}
A simplicial functor $F: \vect{\bb{k}} \to \sS$ is \emph{$n$-polynomial} if the canonical map
\[
F(V) \longrightarrow \underset{0 \neq U \subseteq \bb{k}^{n+1}}{\holim}~F(V \oplus U) =: T_nF(V),
\]
is an equivalence, for all $V \in \vect{\bb{k}}$. The homotopy limit is taken over the category of non-zero linear subspaces of $\bb{k}^{n+1}$, see e.g.,~\cite{WeissErratum} for more on this homotopy limit and note that we have used different notation from \emph{loc. cit.} 
\end{definition}

By the (evaluated cotensor version of) Yoneda Lemma~\cite[Lemma 2.14]{BBGJS2} and~\cite[Proposition 2.18]{BBGJS2}, a functor is $n$-polynomial if and only if the map 
\[
F^{\vect{\bb{k}}(V, -)} \longrightarrow F^{\hocolim_{0 \neq U \subseteq \bb{k}^{n+1}}\vect{\bb{k}}(V \oplus U, -)}
\]
is an equivalence for each $V \in \vect{\bb{k}}$. Define the universal $n$-polynomial approximation as 
\[
P_nF = \hocolim( F \to T_nF \to T_n^2 F \to \cdots ).
\]
A proof that $P_nF$ is $n$-polynomial may be found in~\cite{WeissErratum}. Barnes and Oman~\cite[Proposition 6.5]{BarnesOman} constructed an $n$-polynomial model structure for Weiss calculus by Bousfield--Friedlander localizing the projective model structure at the endofunctor $P_n$. They then go on to show~\cite[Proposition 6.6]{BarnesOman} that this model structure is a left Bousfield localization, which corrects the lack of cofibrant generation. Using~\cref{main thm: LBL} we give an alternative proof.

\begin{thm}\label{thm: n-poly tested}
The $n$-polynomial model structure is a tested Bousfield--Friedlander localization with set of test morphisms given by 
\[
T(P_n) =\left\{\underset{0\neq U \subseteq \bb{k}^{n+1}}{\hocolim}~\vect{\bb{k}}(V \oplus U,-) \longrightarrow \vect{\bb{k}}(V,-) \mid V \in \vect{\bb{k}}\right\}.
\]
In particular, the $n$-polynomial model structure is proper and cellular.
\end{thm}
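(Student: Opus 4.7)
The plan is to verify the testing property in four steps: recognise the Quillen idempotent monad (done by Barnes and Oman), evaluate the cotensor at the candidate test morphisms, translate the two homotopy pullback conditions into conditions on $\hofibre(F \to G)$, and match those conditions. The properness and cellularity claim then follows immediately from \cref{cor: cof gen}, since the Kan--Quillen model structure on $\sS$ is proper and cellular.

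First, Barnes and Oman~\cite[Proposition 6.5]{BarnesOman} establish that $P_n$ is a Quillen idempotent monad on the projective model structure on $\Fun(\vect{\bb{k}}, \sS)$, so the Bousfield--Friedlander localization exists and one only needs to verify the existence of test morphisms. For the cotensor computation, I would combine the evaluated-cotensor Yoneda lemma with the fact that the evaluated cotensor sends homotopy colimits in the indexing variable to homotopy limits (\cite[Lemma 2.14, Proposition 2.18]{BBGJS2}), yielding $F^{\vect{\bb{k}}(V,-)} \simeq F(V)$ and
\[
F^{\hocolim_{0 \neq U \subseteq \bb{k}^{n+1}} \vect{\bb{k}}(V \oplus U,-)} \simeq \holim_{0 \neq U \subseteq \bb{k}^{n+1}} F(V \oplus U) = T_n F(V).
\]
Hence, for a projective fibration $F \to G$, the cotensor square for the test morphism indexed by $V$ is the square with corners $F(V)$, $T_nF(V)$, $G(V)$, $T_nG(V)$.

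Next, I would translate both homotopy pullback conditions into statements about $\hofibre(F \to G)$. Since $T_n$ is a small homotopy limit, it preserves homotopy pullbacks, so $\hofibre(T_nF \to T_nG) \simeq T_n\hofibre(F \to G)$ levelwise; iterating and using that filtered homotopy colimits in $\sS$ commute with the finite homotopy limit $\hofibre$ yields $\hofibre(P_nF \to P_nG) \simeq P_n\hofibre(F \to G)$. Therefore, the Bousfield--Friedlander square is a homotopy pullback if and only if $\hofibre(F \to G) \to P_n\hofibre(F \to G)$ is a levelwise equivalence, while the cotensor squares at every test morphism are homotopy pullbacks if and only if $\hofibre(F \to G) \to T_n\hofibre(F \to G)$ is a levelwise equivalence.

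Both conditions assert that $\hofibre(F \to G)$ is $n$-polynomial: the $T_n$-condition is the definition, and it implies the $P_n$-condition by iterating $T_n$ and passing to the homotopy colimit along the telescope; conversely, if $H \to P_nH$ is an equivalence then $H$ is levelwise equivalent to the $n$-polynomial functor $P_nH$, forcing $H$ itself to be $n$-polynomial. The main obstacle I anticipate is step three, specifically obtaining the levelwise equivalence $\hofibre(P_nF \to P_nG) \simeq P_n\hofibre(F \to G)$. This depends on filtered homotopy colimits in $\sS$ commuting with the homotopy pullbacks defining $\hofibre$, which is standard in $\sS$ but is precisely the point that will restrict any extension of this argument to more general target categories.
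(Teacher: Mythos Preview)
Your proposal has the same overall shape as the paper's proof: identify the cotensor squares with the $(F(V),\,G(V),\,T_nF(V),\,T_nG(V))$-squares via the evaluated Yoneda lemma and~\cite[Proposition 2.18]{BBGJS2}, and then establish the biconditional that the $P_n$-square is a homotopy pullback if and only if every $T_n$-square is. The paper discharges this biconditional by appealing to~\cite[Proposition 9.2]{BBGJS2}, observing that the argument there uses only formal properties of a calculus; you instead supply a direct argument via homotopy fibres, which is a legitimate route.

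There is one point to tighten. You treat $\hofibre(F \to G)$ as a single object of $\Fun(\vect{\bb{k}},\sS)$ to which $T_n$ and $P_n$ may be applied. In unpointed $\sS$ this presupposes a natural basepoint in $G$, which is not part of the data; an arbitrary choice of basepoint in each $G(V)$ need not assemble into a natural transformation $\ast \to G$. The repair is local: for fixed $V$ and $g \in G(V)$, functoriality of $G$ transports $g$ to compatible basepoints in every $G(W)$ with $V \hookrightarrow W$, so $H_g(W) \coloneqq \hofibre_g(F(W)\to G(W))$ is an honest functor on the coslice of $\vect{\bb{k}}$ under $V$, where $T_n$ and $P_n$ still make sense, and your commutation and $n$-polynomiality arguments go through for each $H_g$. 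Since a square in $\sS$ is a homotopy pullback precisely when the induced map on homotopy fibres over every basepoint is an equivalence, quantifying over all $V$ and all $g \in G(V)$ recovers both sides of the biconditional. The filtered-colimit/$\hofibre$ commutation you flagged is not the obstacle---that is indeed routine in $\sS$---it is this basepoint bookkeeping that deserves the extra sentence.
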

\begin{proof}
With this choice of test morphisms, the discussion directly preceding the theorem statement reduces the proof to showing that for $F \to G$ a projective fibration, the square
\[\begin{tikzcd}
	F & {P_nF} \\
	G & {P_nG}
	\arrow[from=1-1, to=1-2]
	\arrow[from=1-1, to=2-1]
	\arrow[from=1-2, to=2-2]
	\arrow[from=2-1, to=2-2]
\end{tikzcd}\]
is a homotopy pullback if and only if the square 
\[\begin{tikzcd}
	F & {T_nF} \\
	G & {T_nG}
	\arrow[from=1-1, to=1-2]
	\arrow[from=1-1, to=2-1]
	\arrow[from=1-2, to=2-2]
	\arrow[from=2-1, to=2-2]
\end{tikzcd}\]
is a homotopy pullback. The proof of this biconditional statement follows analogously to~\cite[Proposition 9.2]{BBGJS2} since \emph{loc. cit.} only uses intrinsic properties of a functor calculus.
\end{proof}

\bibliography{references}

@book{Balchin,
 author = {Balchin, S.},
 title = {A handbook of model categories},
 fseries = {Algebra and Applications},
 series = {Algebr. Appl.},
 issn = {1572-5553},
 volume = {27},
 isbn = {978-3-030-75034-3; 978-3-030-75037-4; 978-3-030-75035-0},
 year = {2021},
 publisher = {Cham: Springer},
}

@article {WeissOrthog,
    AUTHOR = {Weiss, M.},
     TITLE = {Orthogonal calculus},
   JOURNAL = {Trans. Amer. Math. Soc.},
  FJOURNAL = {Transactions of the American Mathematical Society},
    VOLUME = {347},
      YEAR = {1995},
    NUMBER = {10},
     PAGES = {3743--3796}
}

@Article{BarnesOman,
 Author = {Barnes, D. and Oman, P.},
 Title = {Model categories for orthogonal calculus},
 FJournal = {Algebraic \& Geometric Topology},
 Journal = {Algebr. Geom. Topol.},
 ISSN = {1472-2747},
 Volume = {13},
 Number = {2},
 Pages = {959--999},
 Year = {2013}
}

@article{BarnesRoitzheim,
 author = {Barnes, D. and Roitzheim, C.},
 title = {Stable left and right {Bousfield} localisations},
 fjournal = {Glasgow Mathematical Journal},
 journal = {Glasg. Math. J.},
 issn = {0017-0895},
 volume = {56},
 number = {1},
 pages = {13--42},
 year = {2014},
 language = {English},
 doi = {10.1017/S0017089512000882},
 keywords = {55U35,55P42,55P60,18E30,16D90},
 zbMATH = {6261078},
 Zbl = {1297.55020}
}

@Article{TaggartLocalizations,
 Author = {Taggart, N.},
 Title = {The localization of orthogonal calculus with respect to homology},
 FJournal = {Algebraic \& Geometric Topology},
 Journal = {Algebr. Geom. Topol.},
 ISSN = {1472-2747},
 Volume = {24},
 Number = {3},
 Pages = {1505--1549},
 Year = {2024}
}

@article{CarrTaggart,
  title={Symplectic {W}eiss calculi},
  author={Carr, M. and Taggart, N.},
  journal={arXiv:2404.11796},
  year={2024}
}

@article {WeissErratum,
    AUTHOR = {Weiss, M.},
     TITLE = {Erratum: ``{O}rthogonal calculus''},
   JOURNAL = {Trans. Amer. Math. Soc.},
  FJOURNAL = {Transactions of the American Mathematical Society},
    VOLUME = {350},
      YEAR = {1998},
    NUMBER = {2},
     PAGES = {851--855},
}

@article{BBGJS2,
  title={Cofibrantly generated model structures for functor calculus},
  author={Bandklayder, L. and Bergner, J. E. and Griffiths, R. and Johnson, B. and Santhanam, R.},
  journal={arXiv:2304.12954},
  note = {(To appear in Algebraic and Geometric Topology)},
  year={2023}
}

@Article{BBGJS1,
 Author = {Bandklayder, L. and Bergner, J. E. and Griffiths, R. and Johnson, B. and Santhanam, R.},
 Title = {Enriched functor categories for functor calculus},
 FJournal = {Topology and its Applications},
 Journal = {Topology Appl.},
 ISSN = {0166-8641},
 Volume = {316},
 Pages = {33},
 Year = {2022},
}

@Article{TaggartUnitary,
 Author = {Taggart, N.},
 Title = {Unitary calculus: model categories and convergence},
 FJournal = {Journal of Homotopy and Related Structures},
 Journal = {J. Homotopy Relat. Struct.},
 ISSN = {2193-8407},
 Volume = {17},
 Number = {3},
 Pages = {419--462},
 Year = {2022}
}

@Book{Hirschhorn,
 author = {Hirschhorn, P. S.},
 title = {Model categories and their localizations},
 fseries = {Mathematical Surveys and Monographs},
 series = {Math. Surv. Monogr.},
 issn = {0076-5376},
 volume = {99},
 isbn = {0-8218-3279-4},
 year = {2003},
 publisher = {Providence, RI: American Mathematical Society (AMS)},
}

@Article{BiedermannChornyRondigs,
 Author = {Biedermann, G. and Chorny, B. and R{\"o}ndigs, O.},
 Title = {Calculus of functors and model categories},
 FJournal = {Advances in Mathematics},
 Journal = {Adv. Math.},
 ISSN = {0001-8708},
 Volume = {214},
 Number = {1},
 Pages = {92--115},
 Year = {2007}
}

@Article{BiedermannRondigs,
 Author = {Biedermann, G. and R{\"o}ndigs, O.},
 Title = {Calculus of functors and model categories. {II}},
 FJournal = {Algebraic \& Geometric Topology},
 Journal = {Algebr. Geom. Topol.},
 ISSN = {1472-2747},
 Volume = {14},
 Number = {5},
 Pages = {2853--2913},
 Year = {2014}
}

@misc{BousfieldFriedlander,
 author = {Bousfield, A. K. and Friedlander, E. M.},
 title = {Homotopy theory of {{\(\Gamma\)}}-spaces, spectra, and bisimplicial sets},
 year = {1978},
 language = {English},
 howpublished = {Geom. {Appl}. {Homotopy} {Theory}, {II}, {Proc}. {Conf}., {Evanston} 1977, {Lect}. {Notes} {Math}. 658, 80-130 (1978).},
 keywords = {55U35,55P42,18G30},
 zbMATH = {3629842},
 Zbl = {0405.55021}
}

@article{Bousfield,
 author = {Bousfield, A. K.},
 title = {On the telescopic homotopy theory of spaces},
 fjournal = {Transactions of the American Mathematical Society},
 journal = {Trans. Am. Math. Soc.},
 issn = {0002-9947},
 volume = {353},
 number = {6},
 pages = {2391--2426},
 year = {2001},
}

@article {GoodCalcI,
    AUTHOR = {Goodwillie, T. G.},
     TITLE = {Calculus. {I}. {T}he first derivative of pseudoisotopy theory},
   JOURNAL = {$K$-Theory},
  FJOURNAL = {$K$-Theory. An Interdisciplinary Journal for the Development,
              Application, and Influence of $K$-Theory in the Mathematical
              Sciences},
    VOLUME = {4},
      YEAR = {1990},
    NUMBER = {1},
     PAGES = {1--27},
}

@article {GoodCalcII,
    AUTHOR = {Goodwillie, T. G.},
     TITLE = {Calculus. {II}. {A}nalytic functors},
   JOURNAL = {$K$-Theory},
  FJOURNAL = {$K$-Theory. An Interdisciplinary Journal for the Development,
              Application, and Influence of $K$-Theory in the Mathematical
              Sciences},
    VOLUME = {5},
      YEAR = {1992},
    NUMBER = {4},
     PAGES = {295--332},
}

@article {GoodCalcIII,
    AUTHOR = {Goodwillie, T. G.},
     TITLE = {Calculus. {III}. {T}aylor series},
   JOURNAL = {Geom. Topol.},
  FJOURNAL = {Geometry and Topology},
    VOLUME = {7},
      YEAR = {2003},
     PAGES = {645--711},
}

@article{BauerJohnsonMcCarthy,
 author = {Bauer, K. and Johnson, B. and McCarthy, R.},
 title = {Cross effects and calculus in an unbased setting},
 fjournal = {Transactions of the American Mathematical Society},
 journal = {Trans. Am. Math. Soc.},
 issn = {0002-9947},
 volume = {367},
 number = {9},
 pages = {6671--6718},
 year = {2015},
}

@article{Stanculescu,
 author = {Stanculescu, A. E.},
 title = {Note on a theorem of {Bousfield} and {Friedlander}},
 fjournal = {Topology and its Applications},
 journal = {Topology Appl.},
 issn = {0166-8641},
 volume = {155},
 number = {13},
 pages = {1434--1438},
 year = {2008},
}

@article{TaggartReal,
 author = {Taggart, N.},
 title = {Unitary functor calculus with reality},
 fjournal = {Glasgow Mathematical Journal},
 journal = {Glasg. Math. J.},
 issn = {0017-0895},
 volume = {64},
 number = {1},
 pages = {197--230},
 year = {2022},
}

@article{McCarthy,
 author = {McCarthy, R.},
 title = {Relative algebraic {{\(K\)}}-theory and topological cyclic homology},
 fjournal = {Acta Mathematica},
 journal = {Acta Math.},
 issn = {0001-5962},
 volume = {179},
 number = {2},
 pages = {197--222},
 year = {1997},
}

@article{KrannichRandal-Williams,
  title={Diffeomorphisms of discs and the second {Weiss} derivative of {$\mathsf{BTop}(-)$}},
  author={Krannich, M. and Randal-Williams, O.},
  journal={arXiv:2109.03500},
  year={2021}
}

@Article{Hu,
 Author = {Hu, Y.},
 Title = {Metastable complex vector bundles over complex projective spaces},
 FJournal = {Transactions of the American Mathematical Society},
 Journal = {Trans. Am. Math. Soc.},
 ISSN = {0002-9947},
 Volume = {376},
 Number = {11},
 Pages = {7783--7814},
 Year = {2023},
}

@article{Dugger,
 author = {Dugger, D.},
 title = {Replacing model categories with simplicial ones},
 fjournal = {Transactions of the American Mathematical Society},
 journal = {Trans. Am. Math. Soc.},
 issn = {0002-9947},
 volume = {353},
 number = {12},
 pages = {5003--5027},
 year = {2001},
}

@article{Moser,
 author = {Moser, L.},
 title = {Injective and projective model structures on enriched diagram categories},
 fjournal = {Homology, Homotopy and Applications},
 journal = {Homology Homotopy Appl.},
 issn = {1532-0073},
 volume = {21},
 number = {2},
 pages = {279--300},
 year = {2019},
 language = {English},
 doi = {10.4310/HHA.2019.v21.n2.a15},
 keywords = {18D20,18G55,55U35},
 zbMATH = {7114174},
 Zbl = {1425.18003}
}

@book{HTT,
 author = {Lurie, J.},
 title = {Higher topos theory},
 fseries = {Annals of Mathematics Studies},
 series = {Ann. Math. Stud.},
 volume = {170},
 isbn = {978-0-691-14049-0; 978-0-691-14048-3},
 year = {2009},
 publisher = {Princeton, NJ: Princeton University Press},
 language = {English},
 doi = {10.1515/9781400830558},
 keywords = {18-02,18B25,18G30,18G55,18F20},
 zbMATH = {5497319},
 Zbl = {1175.18001}
}

@article{GreenleesShipley,
 author = {Greenlees, J. P. C. and Shipley, B.},
 title = {The cellularization principle for {Quillen} adjunctions},
 fjournal = {Homology, Homotopy and Applications},
 journal = {Homology Homotopy Appl.},
 issn = {1532-0073},
 volume = {15},
 number = {2},
 pages = {173--184},
 year = {2013},
 language = {English},
 doi = {10.4310/HHA.2013.v15.n2.a11},
 keywords = {55U35,55P42,55P60},
 zbMATH = {6233913},
 Zbl = {1284.55021}
}

@book{Hovey,
 author = {Hovey, M.},
 title = {Model categories},
 fseries = {Mathematical Surveys and Monographs},
 series = {Math. Surv. Monogr.},
 issn = {0076-5376},
 volume = {63},
 isbn = {0-8218-1359-5},
 year = {1999},
 publisher = {Providence, RI: American Mathematical Society},
 language = {English},
 keywords = {55-02,18G30,55U35},
 zbMATH = {1226952},
 Zbl = {0909.55001}
}

@book{MunsonVolic,
 author = {Munson, B. A. and Voli{\'c}, I.},
 title = {Cubical homotopy theory},
 fseries = {New Mathematical Monographs},
 series = {New Math. Monogr.},
 volume = {25},
 isbn = {978-1-107-03025-1},
 year = {2015},
 publisher = {Cambridge: Cambridge University Press},
 language = {English},
 keywords = {55-02,55Q70},
 zbMATH = {6476718},
 Zbl = {1352.55001}
}

@article{ABFJ,
 author = {Anel, M. and Biedermann, G. and Finster, E. and Joyal, A.},
 title = {Goodwillie's calculus of functors and higher topos theory},
 fjournal = {Journal of Topology},
 journal = {J. Topol.},
 issn = {1753-8416},
 volume = {11},
 number = {4},
 pages = {1100--1132},
 year = {2018},
 language = {English},
 doi = {10.1112/topo.12082},
 keywords = {18B25,18F05,55P65},
 zbMATH = {7015306},
 Zbl = {1423.18009}
}
\bibliographystyle{alpha}
\end{document}